\documentclass[10pt]{amsart}

\usepackage[all]{xy}
\usepackage{amsmath}
\usepackage{mathrsfs}
\usepackage{amssymb}
\usepackage{mathbbol}
\usepackage{a4wide}

\usepackage[utf8]{inputenc}
\usepackage[T1]{fontenc}

\newtheorem{theorem}{Theorem}[section]

\theoremstyle{definition}
\newtheorem{definition}[theorem]{Definition}
\newtheorem{example}[theorem]{Example}

\newcommand{\cst}{\mathrm{C}^*}
\newcommand{\CC}{\mathbb{C}}
\newcommand{\RR}{\mathbb{R}}
\newcommand{\TT}{\mathbb{T}}
\newcommand{\ZZ}{\mathbb{Z}}
\newcommand{\GG}{\mathbb{G}}
\newcommand{\sS}{\mathbb{S}}
\newcommand{\st}{\:\vline\:}
\renewcommand{\Bar}[1]{\overline{#1}}
\newcommand{\id}{\mathrm{id}}
\newcommand{\tens}{\otimes}
\newcommand{\I}{\mathbb{1}}
\newcommand{\comp}{\!\circ\!}
\newcommand{\cA}{\mathscr{A}}
\newcommand{\cK}{\mathscr{K}}
\newcommand{\cJ}{\mathscr{J}}
\newcommand{\cI}{\mathscr{I}}
\newcommand{\atens}{\tens_{\textrm{\tiny alg}}}
\DeclareMathOperator{\C}{C}
\newcommand{\cT}{\mathcal{T}}
\newcommand{\bz}{\boldsymbol{z}}
\newcommand{\bx}{\boldsymbol{x}}
\begin{document}

\title{When is a quantum space not a group?}

\author{Piotr M.~So{\l}tan}
\address{Institute of Mathematics of the Polish Academy of Sciences\\
and\\
Department of Mathematical Methods in Physics, Faculty of Physics, University of Warsaw}
\email{{\tt piotr.soltan@fuw.edu.pl}}

\thanks{Research partially supported by Polish government grant no.~N201 1770 33.}

\begin{abstract}
We give a survey of techniques from quantum group theory which can be used to show that some quantum spaces (objects of the category dual to the category of $\mathrm{C}^*$-algebras) do not admit any quantum group structure. We also provide a number of examples which include some very well known quantum spaces. Our tools include several purely quantum group theoretical results as well as study of existence of characters and traces on $\mathrm{C}^*$-algebras describing the considered quantum spaces as well as properties such as nuclearity.
\end{abstract}

\maketitle

\hyphenation{non-emp-ty}

\section{Introduction}\label{intro}

Let $X$ be a topological space. One can easily turn $X$ into an associative topological semigroup. A possible definition of multiplication is $x\cdot{y}=y$ for all $x,y\in{X}$. This is clearly not a group (unless $X$ consists of a single point). One is therefore lead to a more refined question whether $X$ can be turned into a topological \emph{group}.

It is not difficult to find topological spaces which cannot be given a structure of a topological group at all. As an example of this phenomenon consider the interval $[0,1]$. Clearly this topological space cannot be a topological group because there is no homeomorphism of $[0,1]$ onto itself carrying an end point onto an interior point (the endpoints have neighborhoods which are connected after the endpoint is removed and interior points do not have such neighborhoods) and there would have been one if $[0,1]$ were a topological group. The same argument shows that no manifold with boundary can be endowed with a structure of a topological group.

One is forced apply much more sophisticated tools to prove, for example, that the two-sphere $\sS^2$ is not a topological group with its usual topology. Of course, the arguments produced above no longer work, as $\sS^2$ is a homogeneous space. Still, using some results about vector bundles or by noticing that the cohomology ring of $\sS^2$ does not admit a Hopf algebra structure, the conclusion that $\sS^2$ is not a group can be reached (\cite[Section 3.C]{hatch}).

In this paper we want to address similar questions, but instead of topological spaces we want to consider objects of \emph{noncommutative topology} or \emph{quantum spaces,} i.e.~objects of the category dual to the category of $\cst$-algebras (\cite{connes,unbo}).
Perhaps not unexpectedly the tools we will use to show that some well known quantum spaces do not admit a group structure (they are not \emph{quantum groups} as defined in Section \ref{qgsect}) are of completely different nature than those known from classical topology.

Let us briefly describe the contents of the paper. In Section \ref{qgsect} we give definitions of objects of our study such as quantum spaces, quantum semigroups and compact quantum groups. We also provide some basic examples and introduce standard terminology. Section \ref{addi} is devoted to a survey of results from the theory of compact quantum groups which we need in following sections. In particular we describe the constructions of the \emph{reduced} and \emph{universal} versions of a given compact quantum group and define \emph{Woronowicz characters}. The next section introduces the tools used to show that some well known quantum spaces do not admit a compact quantum group structure. The results are tailored to suit applications and proofs of some known facts (e.g.~from \cite{bmt}) are considerably simplified. Finally, in Section \ref{ex} we describe in detail examples of quantum spaces not admitting any compact quantum group structure. 
At the end of this section we discuss some partial results about the \emph{quantum disk}.

\section{Compact quantum semigroups and compact quantum groups}\label{qgsect}

Let us first define \emph{compact quantum spaces.} The category of compact quantum spaces is by definition the category dual to the category of unital $\cst$-algebras with unital $*$-homomorphisms as morphisms (the compactness of our quantum spaces is encoded in the fact that all considered $\cst$-algebras will have a unit). The choice to restrict attention to compact quantum spaces is motivated mainly by the amazingly rich theory of \emph{compact quantum groups} as defined below.

\begin{definition}\label{Defqg}
\noindent\begin{enumerate}
\item\label{Defqg1} A \emph{compact quantum semigroup} is a pair $\GG=(A,\Delta)$, where $A$ is a unital $\cst$-algebra and $\Delta$ is a morphism $A\to{A}\tens{A}$ (minimal tensor product of $\cst$-algebras) such that
\[
(\id\tens\Delta)\comp\Delta=(\Delta\tens\id)\comp\Delta.
\]
The morphism $\Delta$ is called the \emph{comultiplication} of $\GG$.
\item\label{Defqg2} A \emph{compact quantum group} is a compact quantum semigroup $\GG=(A,\Delta)$ such that the linear spans of the sets
\[
\bigl\{\Delta(a)(\I\tens{b})\st{a,b}\in{A}\bigr\}\quad\text{and}\quad\bigl\{(a\tens\I)\Delta(b)\st{a,b}\in{A}\bigr\}
\]
are dense in $A\tens{A}$.
\end{enumerate}
\end{definition}

\begin{example}\label{com}
Let $G$ be a compact associative semigroup and let $A=\C(G)$. We define $\Delta:A\to{A}\tens{A}=\C(G\times{G})$ by
\[
(\Delta{f})(s,t)=f(st).
\]
Then $\GG=(A,\Delta)$ is a compact quantum semigroup as in Definition \ref{Defqg}\eqref{Defqg1}. The density conditions from Definition \ref{Defqg}\eqref{Defqg2} are equivalent to cancellation laws:
\begin{equation}\label{cancel}
\begin{split}
\bigl(s\cdot{t}=s'\cdot{t}\bigr)&\Longrightarrow\bigl(s=s'\bigr),\\
\bigl(s\cdot{t}=s\cdot{t'}\bigr)&\Longrightarrow\bigl(t=t'\bigr),
\end{split}
\end{equation}
so $\GG=(A,\Delta)$ is a compact quantum group if and only id the implications \eqref{cancel} hold for any $s,s',t,t'\in{G}$, i.e.~precisely when $G$ is a compact group.
\end{example}

It is a fact that any compact quantum group $\GG=(A,\Delta)$ with $A$ commutative is necessarily of the form described in Example \ref{com}.

\begin{example}\label{cocom}
Let $\Gamma$ be a discrete group and let $A=\cst(\Gamma)$. Since $A$ is the completion of $\ell^1(\Gamma)$ in the maximal $\cst$-norm, there is a copy of $\Gamma$ inside the unitary group of $A$. Moreover $A$ is generated by these elements. One can easily see that there exists a unique $\Delta:A\to{A}\tens{A}$ such that
\[
\Delta(\gamma)=\gamma\tens\gamma
\]
for all $\gamma\in\Gamma$. It is clear that $\GG=(A,\Delta)$ is a compact quantum group.
\end{example}

The comultiplication of the compact quantum group $\GG=(A,\Delta)$ described in Example \ref{cocom} is \emph{cocommutative,} i.e.~$\Delta=\sigma\comp\Delta$, (where $\sigma:{A}\tens{A}\to{A}\tens{A}$ is the flip). One is tempted to write that all cocommutative compact quantum groups are of the form described in Example \ref{cocom}. However this statement is not true. It is true for \emph{universal} compact quantum groups which we will define in Subsection \ref{univ}.

There are numerous other examples of compact quantum groups in the literature for which we refer the reader to e.g.~\cite{su2,wvd,qdt}.

\section{Additional structure}\label{addi}

Throughout this section we let $\GG=(A,\Delta)$ be a compact quantum group. The $\cst$-algebra $A$ carries very rich additional structure. As we will see in Section \ref{ex}, this fact may be used to decide whether a given compact quantum space can be endowed with a compact quantum group structure.

The results about compact quantum groups formulated in this section are all covered in \cite{cqg} except for the material of Subsection \ref{univ} which can be found in \cite[Section 3]{bmt}.

\subsection{Hopf algebra}

The first result we want to state relates compact quantum groups to Hopf $*$-algebras. For the theory of Hopf algebras we refer to \cite{sweedler}. Hopf $*$-algebras and their generalizations are discussed e.g.~in \cite{podmu}.

\begin{definition}
An $n$-dimensional unitary representation of $\GG$ is a unitary matrix
\[
u=
\begin{bmatrix}
u_{1,1}&\cdots&u_{1,n}\\
\vdots&\ddots&\vdots\\
u_{n,1}&\cdots&u_{n,n}
\end{bmatrix}
\in{M_n(\CC)}\tens{A}=M_n(A)
\]
such that
\begin{equation}\label{Delu}
\Delta(u_{i,j})=\sum_{k=1}^nu_{i,k}\tens{u_{k,j}}.
\end{equation}
Elements $\{u_{i,j}\}_{i,j=1,\ldots,n}$ are called the \emph{matrix elements of $u$}.\index{matrix elements of a representation}
\end{definition}

The concept of a finite dimensional unitary representation of a compact quantum group is a straightforward generalization of the notion of a finite dimensional unitary representation of a compact group. It is easy to see that in the case presented in Example \ref{com} representations of the compact quantum group are the same objects as ordinary representations of the group.

\begin{theorem}[S.L.~Woronowicz]\label{cB}
Let $\GG=(A,\Delta)$ be a compact quantum group and let $\cA$ be the linear span of matrix elements of all finite dimensional unitary representations of $\GG$. Then
\begin{enumerate}
\item $\bigl.\Delta\bigr|_{\cA}(\cA)\subset\cA\atens\cA$,
\item $\cA$ is dense unital $*$-subalgebra in $A$,
\item if $\Delta_{\cA}$ is the restriction of $\Delta$ to $\cA$ then $(\cA,\Delta_{\cA})$ is a Hopf $*$-algebra. In particular there exist antipode $\kappa:\cA\to\cA$ and counit $e:\cA\to\CC$.
\item $\cA$ is the unique Hopf $*$-algebra which can be embedded in $(A,\Delta)$ as a dense $*$-subalgebra with the same comultiplication.\footnote{The reference for the last statement of Theorem \ref{cB} is \cite[Theorem 5.1]{bmt}.}
\end{enumerate}
\end{theorem}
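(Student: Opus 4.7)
The plan is to prove the uniqueness by establishing both inclusions $\mathscr{B}\subset\cA$ and $\cA\subset\mathscr{B}$ for any Hopf $*$-algebra $\mathscr{B}$ embedded as a dense $*$-subalgebra of $A$ with $\Delta(\mathscr{B})\subset\mathscr{B}\atens\mathscr{B}$; let $e_{\mathscr{B}}$ and $\kappa_{\mathscr{B}}$ denote its counit and antipode.

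For $\mathscr{B}\subset\cA$, the fundamental theorem of coalgebras ensures that every $b\in\mathscr{B}$ lies in a finite-dimensional subcoalgebra $V\subset\mathscr{B}$. Fixing a basis $v_1,\ldots,v_n$ of $V$, one writes $\Delta(v_j)=\sum_i v_i\tens w_{ij}$ for uniquely determined $w_{ij}\in V$; coassociativity then forces $\Delta(w_{ij})=\sum_k w_{ik}\tens w_{kj}$, so $w=[w_{ij}]\in M_n(\mathscr{B})$ is a finite-dimensional, a priori non-unitary, representation of $\GG$, and applying $e_{\mathscr{B}}\tens\id$ expresses each $v_j$, hence also $b$, as a $\CC$-linear combination of the $w_{ij}$. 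I would then invoke the result from \cite{cqg} that every finite-dimensional representation of a compact quantum group is equivalent to a unitary one: there is $S\in\mathrm{GL}_n(\CC)$ such that $u=SwS^{-1}\in M_n(A)$ is unitary. The $w_{ij}$ are therefore $\CC$-linear combinations of the $u_{ij}$, which by definition lie in $\cA$, so $b\in\cA$.

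For the reverse inclusion I would exploit the cosemisimplicity of $\cA$ established in \cite{cqg}: the Peter--Weyl decomposition $\cA=\bigoplus_\alpha C(\alpha)$ into finite-dimensional simple subcoalgebras, mutually orthogonal with respect to the inner product induced by the Haar state $h$ of $\GG$. Having already shown $\mathscr{B}\subset\cA$, the standard fact that a subcoalgebra of a cosemisimple coalgebra is a direct sum of simple subcoalgebras of the ambient coalgebra yields $\mathscr{B}=\bigoplus_{\alpha\in I}C(\alpha)$ for some subset $I$ of the equivalence classes of irreducibles. The $\cst$-density of $\mathscr{B}$ in $A$ implies $L^2(h)$-density in $L^2(A,h)$, whose closure is the Hilbert direct sum of exactly those $C(\alpha)$ with $\alpha\in I$; no summand may be omitted, so $\mathscr{B}=\cA$.

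The main obstacle is the unitarization step in the first inclusion: this is precisely where the compact quantum \emph{group} structure of $\GG$ enters in an essential way, via the Haar state used to construct an intertwiner conjugating $w$ into a unitary representation. Once unitarization is granted, the rest of the argument is purely coalgebraic combined with $L^2$-density, and the resulting equality $\mathscr{B}=\cA$ automatically respects the full Hopf $*$-algebra structure, since the counit and antipode of a Hopf algebra are uniquely determined by its underlying (co)multiplication.
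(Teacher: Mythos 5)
The paper does not actually prove Theorem \ref{cB}: it is a survey statement, with parts (1)--(3) quoted from \cite{cqg} and part (4) from \cite[Theorem 5.1]{bmt}, so there is no in-paper argument to compare yours against. Your proposal addresses only the uniqueness claim (4), taking (1)--(3) --- and with them the Haar state, the orthogonality relations and the unitarizability theorem --- as given; that division of labour matches the paper's own footnote and is reasonable. The argument you give for (4) is essentially the one in \cite{bmt}: the fundamental theorem of coalgebras plus unitarization yields $\mathscr{B}\subset\cA$, and cosemisimplicity plus $L^2(h)$-density yields $\cA\subset\mathscr{B}$. One step must be made precise: the unitarizability theorem of \cite{cqg} applies to \emph{non-degenerate} (invertible) finite-dimensional representations, and a matrix $w\in M_n(A)$ satisfying only $\Delta(w_{ij})=\sum_k w_{ik}\tens w_{kj}$ need not be equivalent to a unitary one (the zero matrix already satisfies this relation). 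In your setting invertibility does hold, but you should say why: since the $v_i$ form a basis, $(\id\tens e_{\mathscr{B}})\Delta(v_j)=v_j$ forces $e_{\mathscr{B}}(w_{ij})=\delta_{ij}$, and the antipode axioms for $\mathscr{B}$ then show that $\bigl[\kappa_{\mathscr{B}}(w_{ij})\bigr]$ is a two-sided inverse of $w$ inside $M_n(\mathscr{B})\subset M_n(A)$. With that inserted, and with the standard facts that $h$ is faithful on $\cA$ and that the coefficient coalgebras $C(\alpha)$ are pairwise orthogonal in $L^2(A,h)$ (both from \cite{cqg}), your argument is complete and correct.
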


Let $\GG=(A,\Delta)$ be a compact quantum group. The dense Hopf $*$-algebra $\cA$ of $A$ is called the \emph{Hopf $*$-algebra associated to $\GG$.} In case of a classical group $G$ (as in Example \ref{com}) the associated Hopf $*$-algebra is the algebra of regular functions on $G$. In the case described in Example \ref{cocom} the associated Hopf $*$-algebra is the group algebra of $\Gamma$.

The antipode and counit of $\cA$ may fail to have continuous extensions to $A$. We will return to the case when $e$ extends to a character of $A$ in Subsection \ref{contW}.

A quantum group $\GG=(A,\Delta)$ with the property that the antipode of $\cA$ extends to a continuous linear map $A\to{A}$ is called a quantum group of \emph{Kac type.} The situation when $\kappa$ has a continuous extension to $A$ can be characterized in many different ways. We will give some equivalent conditions for this phenomenon to hold in Subsection \ref{haar}.

\subsection{Haar measure}\label{haar}

\begin{theorem}[S.L.~Woronowicz]\label{h}
Let $\GG=(A,\Delta)$ be a compact quantum group. Then there exists a unique state $h$ on $A$ such that
\begin{equation*}
(\id\tens{h})\Delta(a)=(h\tens\id)\Delta(a)=h(a)\I
\end{equation*}
for all $a\in{A}$.
\end{theorem}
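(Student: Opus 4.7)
For uniqueness, suppose $h$ and $h'$ are both states satisfying the stated invariance conditions. Applying $h'$ to the left-invariance identity $(\id\tens h)\Delta(a)=h(a)\I$ gives $(h'\tens h)\Delta(a)=h(a)h'(\I)=h(a)$, while applying $h$ to the right-invariance identity $(h'\tens\id)\Delta(a)=h'(a)\I$ gives $(h'\tens h)\Delta(a)=h'(a)h(\I)=h'(a)$. Hence $h=h'$ on all of $A$.

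For existence, the plan is a convolution--Ces\`aro averaging argument. Introduce the convolution of functionals by $\phi\star\psi:=(\phi\tens\psi)\comp\Delta$; since $\Delta$ is a unital $*$-homomorphism, $\phi\star\psi$ is again a state whenever $\phi$ and $\psi$ are, and coassociativity of $\Delta$ makes $\star$ associative. Fix an arbitrary state $\omega$ on $A$ and form the Ces\`aro averages $h_N:=\tfrac{1}{N}\sum_{k=1}^N\omega^{\star k}$, each of which is again a state. Since the state space of the unital $\cst$-algebra $A$ is weak-$*$ compact, $(h_N)$ admits a weak-$*$ cluster point $h$, which is again a state. The telescoping identity
\[
\omega\star h_N - h_N \;=\; \tfrac{1}{N}\bigl(\omega^{\star(N+1)}-\omega\bigr),
\]
whose right-hand side has norm at most $2/N$, combined with weak-$*$ continuity of $\phi\mapsto\omega\star\phi$, forces $\omega\star h=h$. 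The same argument on the other side yields $h\star\omega=h$.

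The main obstacle is to upgrade these two identities, which express invariance of $h$ merely under the seed state $\omega$, to the full Haar property $(\id\tens h)\Delta(a)=(h\tens\id)\Delta(a)=h(a)\I$, i.e.~invariance under \emph{every} bounded functional. This is exactly where the density conditions of Definition \ref{Defqg}\eqref{Defqg2} enter in an essential way. Concretely, one considers the unital completely positive map $E\colon A\to A$ given by $E(a)=(\id\tens h)\Delta(a)$ and aims to prove $E(a)\in\CC\I$ for every $a\in A$. The technical heart of the argument is to study the contractions $L_\omega(a)=(\omega\tens\id)\Delta(a)$ and promote the weak-$*$ convergence of their Ces\`aro means $\tfrac{1}{N}\sum_{k=1}^N L_\omega^k$ to norm convergence; density of the linear span of $\Delta(A)(\I\tens A)$ in $A\tens A$ is precisely what forces the resulting limit to land in $\CC\I$. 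Reading off the scalar limit as $a\mapsto h(a)\I$ yields left-invariance, and right-invariance follows by the same reasoning from the symmetric density condition.
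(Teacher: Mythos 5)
The paper does not prove Theorem \ref{h} at all --- it is quoted from Woronowicz \cite{cqg} --- so there is no in-paper argument to compare against and your proposal must stand on its own. The uniqueness half does stand: applying $h'$ to the left-invariance identity for $h$ and $h$ to the right-invariance identity for $h'$ gives $(h'\tens h)\Delta(a)=h(a)=h'(a)$, which is the standard, complete argument. The Ces\`aro construction is also sound as far as it goes: $\star$ is associative by coassociativity, the telescoping identity and the bound $2/N$ are correct, and $\phi\mapsto\omega\star\phi$ is weak-$*$ continuous because $(\omega\star\phi)(a)=\phi\bigl((\omega\tens\id)\Delta(a)\bigr)$ is evaluation at a fixed element; so a weak-$*$ cluster point $h$ of the $h_N$ indeed satisfies $\omega\star h=h=h\star\omega$.

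The gap is exactly where you locate ``the main obstacle,'' and you do not close it. Invariance under the single seed state $\omega$ is far from the Haar property: one must show $(\phi\tens h)\Delta(a)=\phi(\I)h(a)$ for \emph{every} $\phi\in A^*$, and a priori the cluster point $h=h_\omega$ even depends on $\omega$. Your sentence about ``promoting the weak-$*$ convergence of the Ces\`aro means of $L_\omega$ to norm convergence'' and the density ``forcing the limit to land in $\CC\I$'' names a goal rather than supplying a mechanism; no argument is given for either assertion, and it is not clear the first is even true as stated. The known proofs (Woronowicz, Van Daele, Maes--Van Daele) proceed through a specific lemma: if $\omega\star h=h=h\star\omega$, then $\rho\star h=\rho(\I)h$ for every positive functional $\rho$ dominated by a multiple of $\omega$. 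Its proof is where the density of the spans of $\Delta(A)(\I\tens A)$ and $(A\tens\I)\Delta(A)$ is genuinely used, via a Cauchy--Schwarz/positivity computation showing that $c=(\id\tens h)\Delta(a)-h(a)\I$ satisfies $\omega(c^*c)=0$; one then removes the dependence on $\omega$ by a compactness argument over finite families of states. Without that lemma (or an equivalent), what you have proved is only that for each state $\omega$ there exists some $\omega$-bi-invariant state --- a fact valid for any compact quantum \emph{semigroup}, making no use of the cancellation axioms that are the actual hypothesis of the theorem. The proposal is therefore incomplete at its decisive step.
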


The state $h$ introduced in Theorem \ref{h} is called the \emph{Haar measure} of $\GG$. Clearly, in the case from Example \ref{com}, the state $h$ corresponds to integration with respect to the normalized Haar measure on $G$. In Example \ref{cocom} the Haar measure is the well known von Neumann trace. A more detailed analysis of the latter example shows that the Haar measure of a compact quantum group need not be a faithful state. Note that in the two examples of Haar measures we just discussed the state $h$ is a trace, i.e.~$h(ab)=h(ba)$ for all $a,b\in{A}$. One can ask if the Haar measure $h$ of a compact quantum group $\GG=(A,\Delta)$ is always a trace. This is not the case. The following theorem describes this situation.

\begin{theorem}[S.L.~Woronowicz]\label{KacT}
Let $\GG=(A,\Delta)$. Let $h$ be the Haar measure of $\GG$ and let $\kappa$ be the antipode of the Hopf $*$-algebra $\cA\subset{A}$. Then the following conditions are equivalent:
\begin{enumerate}
\item $\GG$ is of Kac type,
\item $h$ is a trace,
\item $\kappa$ has a bounded extension to $A$,
\item $\kappa^2=\id$,
\item $\kappa$ is a $*$-antiautomorphism of $\cA$.
\end{enumerate}
\end{theorem}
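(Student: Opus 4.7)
The plan is to reduce everything to properties of the \emph{Woronowicz characters} $(f_z)_{z\in\CC}$: these are $*$-characters of $\cA$, entire in $z$, with $f_0=e$, $f_z*f_w=f_{z+w}$ under the convolution $(f*g)(a)=(f\tens g)\Delta(a)$, satisfying
\begin{equation*}
\kappa^2(a)=f_{-1}*a*f_1,\qquad h(ab)=h\bigl(b\cdot(f_1*a*f_1)\bigr)\qquad(a,b\in\cA).
\end{equation*}
Together with $h\comp\kappa=h$ and the faithfulness of $h$ on $\cA$, these are Woronowicz's results covered in \cite{cqg}, which I would cite without reproving.

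The algebraic equivalences go first. For $(4)\Leftrightarrow(5)$: in any Hopf $*$-algebra one has the identity $\kappa(a^*)=\kappa^{-1}(a)^*$, so $\kappa^2=\id$ is equivalent to $\kappa(a^*)=\kappa(a)^*$, which combined with the always-valid anti-multiplicativity of $\kappa$ is precisely the statement that $\kappa$ is a $*$-antiautomorphism. For $(2)\Leftrightarrow(4)$: the second displayed identity shows $h$ is tracial iff $h\bigl(b(a-f_1*a*f_1)\bigr)=0$ for all $a,b\in\cA$, which by faithfulness of $h$ on $\cA$ becomes $f_1*a*f_1=a$ for every $a$; evaluating this on matrix coefficients of irreducible representations (which span $\cA$ by the Peter--Weyl decomposition) and using the group law of $(f_z)$ forces $f_1=e$, whereupon the first identity yields $\kappa^2=\id$. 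The converse substitutes $f_1=e$ back into both identities.

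It remains to prove $(3)\Leftrightarrow(4)$, noting that $(1)\Leftrightarrow(3)$ is a definition. For $(4)\Rightarrow(3)$: by $(5)$ the map $\kappa$ is a $*$-antihomomorphism $\cA\to\cA$, and using the already-obtained traciality of $h$ together with $h\comp\kappa=h$ one computes $\|\widehat{\kappa(a)}\|_h=\|\hat a\|_h$; this makes $\kappa$ isometric in the GNS representation of $h$ and therefore bounded on the reduced $\cst$-algebra, and then on all of $A$ by universality. The main obstacle is the converse $(3)\Rightarrow(4)$: here one must rule out nontrivial Woronowicz characters from a single norm bound on $\kappa$. My plan is to use that $\kappa^2$ acts on the matrix coefficients of an irreducible representation $u^{(\alpha)}$ of dimension $n_\alpha$ by conjugation with a positive invertible $F_\alpha\in M_{n_\alpha}(\CC)$ extracted from the $f_z$'s, and that a norm bound on $\kappa^2$ uniform across all $\alpha$, combined with unitarity of $u^{(\alpha)}$, forces $F_\alpha$ to be the identity matrix for every $\alpha$ and hence $\kappa^2=\id$. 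This last step is where the real weight of the theorem lies; the algebraic equivalences $(2)\Leftrightarrow(4)\Leftrightarrow(5)$ are clean consequences of the two Woronowicz identities and standard Hopf $*$-algebra manipulations.
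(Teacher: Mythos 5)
The paper does not prove this theorem: it is stated as a result of Woronowicz with reference to \cite{cqg}, so there is no in-paper argument to compare against and your proposal must be judged on its own. The skeleton you chose --- reducing everything to the modular functionals $(f_z)$ via $\kappa^2(a)=f_{-1}*a*f_1$ and $h(ab)=h\bigl(b(f_1*a*f_1)\bigr)$ --- is the standard and correct one, and the equivalences $(2)\Leftrightarrow(4)\Leftrightarrow(5)$ go through essentially as you say. One small repair: from $F_\alpha u^{(\alpha)}F_\alpha=u^{(\alpha)}$ and linear independence of matrix elements of an irreducible representation you only get $F_\alpha=\pm\I$; to conclude $f_1=e$ you should invoke positivity of $F_\alpha=\bigl(f_1(u^{(\alpha)}_{ij})\bigr)$ (or write $f_1=f_{1/2}*f_{1/2}$), not merely ``the group law.''

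The genuine gap is in $(3)\Rightarrow(4)$, exactly where you locate the weight of the theorem, and your plan for it does not close as stated. Suppose $\|\kappa\|\le C$ on $A$. In an eigenbasis of $F_\alpha$ with eigenvalues $\mu_1,\dots,\mu_{n_\alpha}>0$, one has $\kappa^{2n}(u'_{kl})=(\mu_k/\mu_l)^n\,u'_{kl}$, and $0<\|u'_{kl}\|\le 1$ by unitarity and irreducibility; the bound $\|\kappa^{2n}\|\le C^{2n}$ therefore yields only $\mu_k/\mu_l\le C^2$, i.e.\ a \emph{uniform bound on the condition numbers} of the $F_\alpha$, not $F_\alpha=\I$. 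A single norm bound on $\kappa^2$, uniform over $\alpha$, simply cannot distinguish $F_\alpha=\I$ from $F_\alpha$ with small but nontrivial spectrum. The missing ingredient is multiplicativity under tensor products: since $f_1$ is a character, $F_{u\tens v}=F_u\tens F_v$, so passing to $u^{\tens m}$ multiplies the extreme eigenvalue ratio to $(\mu_{\max}/\mu_{\min})^m$, and the same uniform bound $C^2$ applied to (irreducible components of) these tensor powers forces $\mu_{\max}=\mu_{\min}$; then $F_\alpha=\lambda_\alpha\I$ and the normalization $\operatorname{Tr}F_\alpha=\operatorname{Tr}F_\alpha^{-1}$ gives $\lambda_\alpha=1$, hence $\kappa^2=\id$. (Even here one must check that the relevant matrix elements of the tensor powers are nonzero, which is where the actual work in \cite{cqg} lies.) A secondary, smaller issue is the end of $(4)\Rightarrow(3)$: GNS-isometry of $a\mapsto\kappa(a)^*$ gives boundedness of $\kappa$ for the \emph{reduced} norm, and its being a $*$-homomorphism of $\cA$ gives contractivity for the \emph{universal} norm, but ``and then on all of $A$ by universality'' is backwards --- $A$ is an arbitrary completion between $A_u$ and $A_r$, and boundedness for the smaller reduced norm does not by itself give boundedness for the norm of $A$; you need an extra line explaining why the given completion is preserved.
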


\subsection{Reduced quantum group}\label{red}

As we noted in Subsection \ref{haar} the Haar measure of a compact quantum group $\GG=(A,\Delta)$ may not be faithful. However the following theorem (essentially due to S.L.~Woronowicz) shows that we can always pass to a ``new version'' of $\GG$ which has a faithful Haar measure.

\begin{theorem}
Let $\GG=(A,\Delta)$ be a compact quantum group with Haar measure $h$. Then the left kernel of $h$
\[
\cJ=\bigl\{a\in{A}\st{h(a^*a)=0}\bigr\}
\]
is a two-sided ideal in $A$. Let $A_r$ be the quotient $A/\cJ$ and let $\rho_r:A\to{A_r}$ be the quotient map.
Then
\begin{enumerate}
\item there exists a unique $\Delta_r:A_r\to{A_r}\tens{A_r}$ such that the diagram
\[
\xymatrix
{
A\ar[d]_{\rho_r}\ar[rr]^-\Delta&&A\tens{A}\ar[d]^{\rho_r\tens\rho_r}\\
A_r\ar[rr]_-{\Delta_r}&&A_r\tens{A_r}
}
\]
is commutative,
\item $\GG_r=(A_r,\Delta_r)$ is a compact quantum group,
\item $\rho_r$ is injective on $\cA\subset{A}$ and $\rho_r(\cA)$ is the Hopf $*$-algebra associated to $\GG_r$. \end{enumerate}
\end{theorem}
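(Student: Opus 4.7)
My plan has three stages. \emph{Stage 1: $\cJ$ is a closed two-sided ideal.} Closedness is immediate since $a\mapsto h(a^*a)$ is continuous. The left-ideal property follows from the operator inequality $b^*a^*ab\le\|a\|^2 b^*b$ combined with positivity of $h$. The right-ideal property is the one genuinely non-trivial point: I would invoke the (Woronowicz) result that $h$ is faithful on $\cA$, so in particular $\cJ\cap\cA=\{0\}$, and pass to the GNS representation $(\pi_h,H_h,\Omega)$ of $h$. Since $\ker\pi_h$ is automatically a two-sided ideal trivially contained in $\cJ$, the task reduces to proving $\cJ\subseteq\ker\pi_h$; by cyclicity of $\Omega$ this is equivalent to $\cJ$ being closed under right multiplication, which is exactly the claim. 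The argument uses the bi-invariance of $h$ under $\Delta$ together with the KMS--modular structure on $\cA$ and a density/continuity argument to upgrade $h(a^*a)=0$ to $\pi_h(a)=0$.

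\emph{Stage 2: descend $h$ and construct $\Delta_r$.} Cauchy--Schwarz gives $|h(a)|^2\le h(\I)h(a^*a)=0$ for $a\in\cJ$, so $h$ vanishes on $\cJ$ and induces a state $h_r$ on $A_r$ which is faithful by construction. For $\Delta_r$ to be well defined I must show $(\rho_r\tens\rho_r)\comp\Delta$ vanishes on $\cJ$. For $a\in\cJ$ the bi-invariance of $h$ yields
\[
(h_r\tens h_r)\bigl((\rho_r\tens\rho_r)\Delta(a^*a)\bigr)=(h\tens h)\Delta(a^*a)=h(a^*a)=0.
\]
Since the argument is the positive element $\bigl((\rho_r\tens\rho_r)\Delta(a)\bigr)^*(\rho_r\tens\rho_r)\Delta(a)$ and $h_r\tens h_r$ is faithful on the minimal tensor product $A_r\tens A_r$ (tensor products of faithful states are faithful on the minimal C*-norm), we conclude $(\rho_r\tens\rho_r)\Delta(a)=0$. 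Thus $\Delta_r$ exists uniquely as a unital $*$-homomorphism making the diagram commute.

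\emph{Stage 3: CQG axioms and the Hopf subalgebra.} Coassociativity of $\Delta_r$ follows by applying $\rho_r\tens\rho_r\tens\rho_r$ to the coassociativity identity for $\Delta$ and using that this map is surjective (its image is a $\cst$-subalgebra of $A_r\tens A_r\tens A_r$ containing the dense algebraic tensor). The density conditions of Definition~\ref{Defqg}\eqref{Defqg2} transport from $\GG$ to $\GG_r$ via the surjection $\rho_r\tens\rho_r$. For part (3): injectivity of $\rho_r|_{\cA}$ is $\cJ\cap\cA=\{0\}$ from Stage 1; since $\Delta(\cA)\subseteq\cA\atens\cA$ by Theorem~\ref{cB}, the commuting diagram gives $\Delta_r(\rho_r(\cA))\subseteq\rho_r(\cA)\atens\rho_r(\cA)$, and antipode and counit transport along the algebra isomorphism $\cA\cong\rho_r(\cA)$, so $\rho_r(\cA)$ is a Hopf $*$-algebra; the uniqueness clause in Theorem~\ref{cB} then identifies it as the Hopf $*$-algebra associated with $\GG_r$. \emph{The main obstacle} is the right-ideal property in Stage 1: plain $\cst$-algebraic Cauchy--Schwarz cannot rule out $h(a^*a)=0$ coexisting with $h(aa^*)>0$, and one must essentially use the invariance of $h$ under $\Delta$ or the Woronowicz modular structure on $\cA$ to close the argument.
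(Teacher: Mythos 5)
The paper itself states this theorem without proof (it is a survey; the result is attributed to Woronowicz), so there is no in-paper argument to compare against line by line, and I am assessing your proposal on its own terms. Stages 2 and 3 are essentially complete and correct: deducing $(\rho_r\tens\rho_r)\Delta(a)=0$ for $a\in\cJ$ from the faithfulness of $h_r\tens h_r$ on the minimal tensor product $A_r\tens A_r$ applied to the positive element $\bigl((\rho_r\tens\rho_r)\Delta(a)\bigr)^*\bigl((\rho_r\tens\rho_r)\Delta(a)\bigr)$ is a clean and correct route (the faithfulness of a minimal tensor product of faithful states follows by iterated slicing), and the transport of coassociativity, of the density conditions of Definition~\ref{Defqg}\eqref{Defqg2}, and of the Hopf $*$-algebra structure via surjectivity of $\rho_r$ and the uniqueness clause of Theorem~\ref{cB} is routine once Stage 1 is in place.

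The genuine gap is exactly where you flag it, and you have not closed it. In Stage 1 you reduce ``$\cJ$ is a right ideal'' to ``$\cJ\subseteq\ker\pi_h$'' and then note that, by cyclicity of $\Omega$, the latter is \emph{equivalent} to the former; this is a restatement, not progress, and the sentence invoking ``the KMS--modular structure \ldots and a density/continuity argument'' is where the entire content of the theorem lives. Be aware of two concrete obstructions to filling it in casually. First, the modular relation $h(xy)=h\bigl(y(f_1*x*f_1)\bigr)$ holds on $\cA$, where $h$ is faithful, so it says nothing directly about elements of $\cJ$, which meets $\cA$ only in $\{0\}$; a naive density argument from $\cA$ to $A$ does not go through because $\sigma$ and $f_1*\cdot*f_1$ need not be bounded on $A$. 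Second, the clean route --- $\Omega$ is separating for $\pi_h(A)''$ because $h$ is a KMS state for the modular group --- risks circularity: the KMS property quoted in the paper (\cite[Theorem 2.6]{cqg}) is established on $A_r$, i.e., after the construction you are trying to justify. The standard resolution is to \emph{define} the reduced algebra as $\pi_h(A)$ (so the relevant ideal is $\ker\pi_h$, automatically two-sided), prove the KMS/modular properties there, and only afterwards conclude that $\ker\pi_h$ coincides with the left kernel $\cJ$. As written, your Stage 1 correctly identifies the crux but does not prove it, and Stages 2 and 3 all rest on it.
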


Let $\GG=(A,\Delta)$ be a compact quantum group with associated Hopf $*$-algebra $\cA$. The compact quantum group $\GG_r$ is called the \emph{reduced version} of $\GG$. In the situation from Example \ref{com} we have $A_r=A$ because the Haar measure is faithful. On the other hand in case from Example \ref{cocom} where $A=\cst(\Gamma)$ for a discrete group $\Gamma$, we have $A_r=\cst_r(\Gamma)$ --- the reduced group $\cst$-algebra of $\Gamma$.

One can also adopt a different point of view and treat $A$ and $A_r$ as completions of $\cA$ with respect to different $\cst$-norms for which $\Delta_\cA$ is continuous.

\subsection{Universal quantum group}\label{univ}

\begin{theorem}\label{ut}
Let $\GG=(A,\Delta)$ be a compact quantum group with associated Hopf $*$-algebra $\cA$. There exists the enveloping $\cst$-algebra $A_u$ of $\cA$. Let $\rho_u:A_u\to{A}$ be the canonical epimorphism. Then
\begin{enumerate}
\item there exists a unique $\Delta_u:A_u\to{A_u}\tens{A_u}$ such that the diagram
\[
\xymatrix
{
A_u\ar[d]_{\rho_u}\ar[rr]^-{\Delta_u}&&A\tens{A}\ar[d]^{\rho_u\tens\rho_u}\\
A\ar[rr]_-{\Delta}&&A\tens{A}
}
\]
is commutative,
\item $\GG_u=(A_u,\Delta_u)$ is a compact quantum group,
\end{enumerate}
\end{theorem}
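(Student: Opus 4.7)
The plan is to produce $A_u$ as the enveloping $\cst$-algebra of $\cA$, to obtain $\Delta_u$ from the universal property, and then to verify coassociativity and the two density conditions by density arguments on $\cA\subseteq A_u$.

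First I would verify that the enveloping $\cst$-algebra $A_u$ exists. By Theorem~\ref{cB} the $*$-algebra $\cA$ is spanned by matrix entries $u_{i,j}$ of finite-dimensional unitary representations $u$ of $\GG$. The unitarity relations for $u$ are finitely many $*$-polynomial identities, so for any $*$-representation $\pi:\cA\to B(H)$ the matrix $[\pi(u_{i,j})]_{i,j}$ is unitary in $M_n(B(H))$; in particular $\|\pi(u_{i,j})\|\leq 1$. Hence the quantity $p_u(a):=\sup_\pi\|\pi(a)\|$ is finite on every $a\in\cA$ and defines a $\cst$-seminorm; because $\cA\hookrightarrow A$ is a faithful $*$-representation it is actually a norm, and I take $A_u$ to be the completion of $\cA$ under $p_u$. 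The inclusion $\cA\hookrightarrow A$ extends by continuity to the canonical epimorphism $\rho_u:A_u\to A$, and $\cA$ sits in $A_u$ as a dense $*$-subalgebra.

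Next I would construct $\Delta_u$. Composing $\Delta_\cA:\cA\to\cA\atens\cA$ with the inclusion $\cA\atens\cA\hookrightarrow A_u\tens A_u$ yields a $*$-homomorphism from $\cA$ into the $\cst$-algebra $A_u\tens A_u$, which extends uniquely by the universal property of $A_u$ to a $*$-homomorphism $\Delta_u:A_u\to A_u\tens A_u$; uniqueness of the extension is immediate from density of $\cA$ in $A_u$. Both routes around the displayed square are continuous $*$-homomorphisms $A_u\to A\tens A$ that restrict to $\Delta_\cA$ on $\cA$, so they coincide on $A_u$. Coassociativity of $\Delta_u$ then reduces to that of $\Delta_\cA$: the two continuous maps $(\id\tens\Delta_u)\comp\Delta_u$ and $(\Delta_u\tens\id)\comp\Delta_u$ agree on the dense subalgebra $\cA$.

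Finally I would establish the density conditions of Definition~\ref{Defqg}\eqref{Defqg2}. Using the antipode $\kappa$ and counit $e$ of $\cA$, one verifies the standard Hopf-algebraic identities
\[
a\tens b=\sum\Delta_\cA(a_{(1)})\bigl(\I\tens\kappa(a_{(2)})b\bigr),\qquad a\tens b=\sum\bigl(a\kappa(b_{(1)})\tens\I\bigr)\Delta_\cA(b_{(2)}),
\]
valid for all $a,b\in\cA$ in Sweedler notation $\Delta_\cA(a)=\sum a_{(1)}\tens a_{(2)}$. These identities show that $\cA\atens\cA$, which is dense in $A_u\tens A_u$, is contained in the linear spans of $\bigl\{\Delta_u(x)(\I\tens y)\st x,y\in A_u\bigr\}$ and $\bigl\{(x\tens\I)\Delta_u(y)\st x,y\in A_u\bigr\}$. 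The hard part is really the opening step: exhibiting $A_u$ requires a universal bound on all $\cst$-seminorms of $\cA$, which is precisely where the finite-dimensionality and unitarity of the generating representations become essential. After that, the argument is a routine combination of the universal property with density of $\cA$ in both $A$ and $A_u$; crucially, $\kappa$ and $e$ are never asked to extend continuously to $A_u$, since they are used only at the purely algebraic level inside $\cA$.
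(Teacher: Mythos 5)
The paper offers no proof of this theorem at all, deferring to \cite[Section 3]{bmt}; your argument is exactly the standard one given there --- boundedness of every $*$-representation of $\cA$ on matrix elements of finite-dimensional unitary representations to obtain the enveloping $\cst$-algebra, the universal property plus density of $\cA$ to obtain $\Delta_u$, the diagram, and coassociativity, and the antipode/counit identities on $\cA\atens\cA$ to obtain the cancellation conditions --- and it is correct. The one small caveat is that what you actually establish is uniqueness of $\Delta_u$ as the continuous extension of $\Delta_\cA$ (which is how the cited source phrases it), not literally uniqueness among all maps making the displayed square commute, since $\rho_u\tens\rho_u$ need not be injective; given the evident typo in the square as printed, this reading is surely the intended one.
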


The compact quantum group $\GG_u$ described in Theorem \ref{ut} is called the \emph{universal version} of $\GG$. Since $A_u$ is defined as the completion of $\cA$ with respect to a maximal possible $\cst$-norm, it is clear that the Hopf $*$-algebra associated to $\GG_u$ is $\cA$.

If $A$ is commutative (i.e.~in the situation of Example \ref{com}) we always have $A_u=A=A_r$. Moreover if $A_r$ is commutative then so is $A$ and consequently $A_u=A=A_r$.

\subsection{Woronowicz characters}

Let $\GG=(A,\Delta)$ be a compact quantum group and let $\phi$ and $\psi$ be continuous functionals on $A$. Then the functional $(\phi\tens\psi)\comp\Delta$ is called the \emph{convolution} of $\phi$ and $\psi$ and is denoted by $\phi*\psi$. Similarly if $a\in{A}$ then we can define \emph{left} and \emph{right convolutions} $\phi*a$ and $a*\psi$ of $a$ with $\phi$ and $\psi$ respectively by
\[
\phi*a=(\id\tens\phi)\Delta(a)\quad\text{and}\quad{a}*\psi=(\psi\tens\id)\Delta(a).
\]
These definitions are straightforward generalizations of the notion of convolution of measures and measures and continuous functions on a compact group (cf.~Example \ref{com}).

If $\cA$ is the Hopf $*$-algebra associated to $\GG$ and $\phi$ and $\psi$ are linear functionals on $\cA$ then the we can use the same formulas to define $\phi*\psi$ and $\phi*a$, $a*\psi$ for $a\in\cA$.

\begin{theorem}[S.L.~Woronowicz]\label{slwc}
Let $\GG=(A,\Delta)$ be a compact quantum group with associated Hopf $*$-algebra $\cA$. Then there exists a unique family $(f_z)_{z\in\CC}$ of non-zero multiplicative functionals on $\cA$ such that
\begin{enumerate}
\item for any $a\in\cA$ the function $z\mapsto{f_z(a)}$ is entire holomorphic,
\item $f_0=e$ and $f_{z_1}*f_{z_2}=f_{z_1+z_2}$ for all $z_1,z_2\in\CC$,
\item\label{slwc3} for any $z\in\CC$ and $a\in\cA$
\[
f_z\bigl(\kappa(a)\bigr)=f_{-z}(a)\quad\text{and}\quad
f_{\Bar{z}}(a^*)=\Bar{f_{-z}(a)}
\]
\item for any $a\in\cA$ we have $\kappa^2(a)=f_{-1}*a*f_1$.
\end{enumerate}
\end{theorem}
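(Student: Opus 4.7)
The plan is to construct the family $(f_z)_{z\in\CC}$ explicitly on matrix elements of irreducible unitary representations and extend by linearity. By Theorem \ref{cB}(2)--(3) the Hopf $*$-algebra $\cA$ is linearly spanned by matrix elements of finite dimensional unitary representations, and a Peter--Weyl-type decomposition (another consequence of the theory developed in \cite{cqg}) splits every such representation into a direct sum of irreducibles. Fixing representatives $u^\alpha = (u^\alpha_{i,j})$ of each equivalence class of irreducibles, the collection $\{u^\alpha_{i,j}\}_{\alpha,i,j}$ furnishes a linear basis of $\cA$. The task therefore reduces to specifying the numbers $f_z(u^\alpha_{i,j})$ for each $\alpha$ and checking conditions (1)--(4) on this basis.

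For each irreducible $u^\alpha$ of dimension $d_\alpha$ one extracts from the modular properties of the Haar measure a unique strictly positive invertible matrix $F_\alpha\in{M_{d_\alpha}(\CC)}$, normalized by $\mathrm{Tr}(F_\alpha)=\mathrm{Tr}(F_\alpha^{-1})$; this is the matrix that intertwines $u^\alpha$ with its double contragredient. With this in hand I would set
\[
f_z(u^\alpha_{i,j}) = (F_\alpha^z)_{i,j}, \qquad F_\alpha^z := \exp(z\log{F_\alpha}),
\]
from which (1) is immediate by holomorphy of matrix exponentials of a positive matrix, and $f_0=e$ because $F_\alpha^0=\I$ and the counit satisfies $e(u^\alpha_{i,j})=\delta_{i,j}$. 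Multiplicativity of each $f_z$ and the convolution identity $f_{z_1}*f_{z_2}=f_{z_1+z_2}$ both follow from \eqref{Delu}: convolution reduces on matrix elements to matrix multiplication of $F_\alpha^{z_1}F_\alpha^{z_2}=F_\alpha^{z_1+z_2}$, while multiplicativity reduces to the compatibility of $F$-matrices with tensor products, i.e.~that $F_{u^\alpha\tens u^\beta}=F_\alpha\tens{F_\beta}$ on each irreducible summand, itself forced by uniqueness.

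It remains to verify (3) and (4). The identities $\kappa(u^\alpha_{i,j})=(u^\alpha_{j,i})^*$, valid for any unitary representation, combined with $(F_\alpha^z)^*=F_\alpha^{\Bar{z}}$ and $(F_\alpha^z)^{-1}=F_\alpha^{-z}$, deliver both relations in (3) by direct substitution; then (4) rewrites the classical formula $\kappa^2(u^\alpha_{i,j})=\sum_{k,l}(F_\alpha^{-1})_{i,k}u^\alpha_{k,l}(F_\alpha)_{l,j}$ as $\kappa^2(u^\alpha_{i,j})=f_{-1}*u^\alpha_{i,j}*f_1$. Uniqueness follows because on each irreducible block property (4) forces the restriction of $f_z$ to be an entire one-parameter family of invertible multiplicative matrices, and the positivity and normalization encoded in (3) pin down its generator as $F_\alpha$. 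The main obstacle is the construction of the matrices $F_\alpha$ themselves: this is not formal and requires the full strength of the Peter--Weyl-type orthogonality relations for compact quantum groups, since it is precisely the non-tracial character of $h$ that produces them. Once these relations are in hand, the verification of (1)--(4) is a sequence of direct algebraic manipulations on matrix elements.
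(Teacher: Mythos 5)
The paper itself offers no proof of this theorem: it is a survey, and Theorem \ref{slwc} is quoted verbatim from Woronowicz's \emph{Compact quantum groups} \cite{cqg}. Your outline reconstructs precisely the route taken there --- Peter--Weyl decomposition of $\cA$ into matrix elements of irreducibles, the canonical positive intertwiner $F_\alpha$ between $u^\alpha$ and its double contragredient normalized by $\mathrm{Tr}(F_\alpha)=\mathrm{Tr}(F_\alpha^{-1})$, and the definition $f_z(u^\alpha_{i,j})=(F_\alpha^z)_{i,j}$ --- so in approach you are aligned with the source the paper relies on, and you correctly identify that the real content lies in producing the $F_\alpha$ from the orthogonality relations. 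One step is dispatched too quickly, however: multiplicativity of $f_z$. Uniqueness of the normalized positive intertwiner only tells you that the restriction of $F_\alpha\tens F_\beta$ to an irreducible summand $w$ of $u^\alpha\tens u^\beta$ is \emph{proportional} to $F_w$; writing that restriction as $c_wF_w$ with $c_w>0$, the trace normalization gives only $\sum_w c_w\mathrm{Tr}(F_w)=\sum_w c_w^{-1}\mathrm{Tr}(F_w)$, which does not force $c_w=1$. Showing $c_w=1$ is exactly where the orthogonality relations for the Haar measure (equivalently, the characterization of $F_\alpha$ through $h(u^\alpha_{i,j}(u^\alpha_{k,l})^*)$) must be invoked, so ``forced by uniqueness'' is not yet an argument. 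There is also a harmless convention slip in (4): with the paper's definitions $\phi*a=(\id\tens\phi)\Delta(a)$ and $a*\psi=(\psi\tens\id)\Delta(a)$, one gets $f_{-1}*u_{i,j}*f_1=\sum_{k,l}(F)_{i,k}u_{k,l}(F^{-1})_{l,j}$, i.e.\ $F$ and $F^{-1}$ appear in the opposite order from what you wrote; this is absorbed by replacing $F$ with $F^{-1}$ throughout but should be made consistent. Finally, note that Woronowicz's uniqueness statement includes a fifth condition relating $f_1$ to the Haar measure; pinning the scalar normalization of each $F_\alpha$ down from conditions (1)--(4) alone, as your last sentence claims, deserves more justification than ``positivity and normalization encoded in (3)''.
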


It is clear from Theorem \ref{slwc}\eqref{slwc3} that $(f_{\mathrm{i}t})_{t\in\RR}$ is a one-parameter group (under the operation of convolution) of $*$-characters of $\cA$. They extend to characters of $A_u$ which we call \emph{Woronowicz characters.} The whole family $(f_z)_{z\in\CC}$ is the family of \emph{modular functionals} of $\GG$.

It is important to note that the family $(f_z)_{z\in\CC}$ may be trivial in the sense that $f_z=e$ for all $z$. Indeed it is always the case for Kac algebras:

\begin{theorem}\label{mKac}
Let $\GG=(A,\Delta)$ be a compact quantum group and $(f_z)_{z\in\CC}$ be the family of modular functionals of $\GG$. Then the following are equivalent:
\begin{enumerate}
\item $f_z=e$ for all $z$,
\item $\GG$ is of Kac type.
\end{enumerate}
\end{theorem}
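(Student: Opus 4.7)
The plan is to reduce both directions to the uniqueness statement in Theorem \ref{slwc}, combined with Theorem \ref{KacT}.

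For the implication $(1)\Rightarrow(2)$, the key tool will be property (4) of Theorem \ref{slwc}: for every $a\in\cA$ one has $\kappa^2(a)=f_{-1}*a*f_1$. Assuming $f_z=e$ for all $z\in\CC$, the right hand side becomes $e*a*e$. The counit axiom of the Hopf $*$-algebra $\cA$ gives $(\id\tens{e})\comp\Delta_\cA=\id=(e\tens\id)\comp\Delta_\cA$, so $e*a=a*e=a$ and consequently $\kappa^2(a)=a$ for every $a\in\cA$. By Theorem \ref{KacT} this forces $\GG$ to be of Kac type.

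For the converse $(2)\Rightarrow(1)$, the idea is to exhibit the constant family $\widetilde{f}_z:=e$, $z\in\CC$, as a valid candidate for the Woronowicz family and then invoke uniqueness. Analyticity in $z$ and the initial condition $\widetilde{f}_0=e$ are trivial. The convolution group law $\widetilde{f}_{z_1}*\widetilde{f}_{z_2}=\widetilde{f}_{z_1+z_2}$ reduces to the identity $e*e=e$, which is again just the counit axiom. The two identities in item \eqref{slwc3} of Theorem \ref{slwc} amount to $e\comp\kappa=e$ (a standard property of counits in Hopf algebras) and $e(a^*)=\overline{e(a)}$ (the counit of a Hopf $*$-algebra is a $*$-character), both of which are compatible with $\widetilde{f}_z$ being constantly equal to $e$. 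Finally, item (4) becomes $\kappa^2(a)=e*a*e=a$, which holds precisely because $\GG$ is assumed to be of Kac type, so $\kappa^2=\id$ by Theorem \ref{KacT}. The uniqueness assertion in Theorem \ref{slwc} then allows the conclusion $f_z=\widetilde{f}_z=e$ for all $z\in\CC$.

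There is no genuine obstacle in this argument: once the equivalence $\kappa^2=\id\Leftrightarrow\GG\text{ is of Kac type}$ (Theorem \ref{KacT}) is taken for granted, everything follows by unpacking the relation $\kappa^2=f_{-1}*(\cdot)*f_1$ together with the uniqueness of $(f_z)_{z\in\CC}$. The only point where one must be slightly careful is to verify all four defining properties of Theorem \ref{slwc} for the constant family before appealing to uniqueness; in particular, property \eqref{slwc3} must be checked for both the antipode and the $*$-structure, not just one of them.
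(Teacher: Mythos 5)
The paper states Theorem \ref{mKac} without proof (like most results of Section \ref{addi} it is attributed to the literature, namely \cite{cqg}), so there is no in-paper argument to compare against. Your proof is correct and self-contained given the other quoted results: the implication $(1)\Rightarrow(2)$ via $\kappa^2(a)=f_{-1}*a*f_1=e*a*e=a$ and Theorem \ref{KacT}, and the converse by verifying that the constant family $\widetilde{f}_z=e$ satisfies all four conditions of Theorem \ref{slwc} (using $e*e=e$, $e\comp\kappa=e$, the $*$-character property of $e$, and $\kappa^2=\id$ from Theorem \ref{KacT}) and then appealing to the uniqueness clause. This is exactly the intended mechanism by which the uniqueness of the modular family forces its triviality in the Kac case, and you correctly identify the one place where care is needed, namely checking both identities in item \eqref{slwc3} for the candidate family.
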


Theorem \ref{mKac} implies, in particular, that commutative and cocommutative examples (Examples \ref{com} and \ref{cocom}) have trivial families of modular functionals.

\subsection{The modular group}

Let $\GG=(A,\Delta)$ be a compact quantum group with associated Hopf $*$-algebra $\cA$ and modular functionals $(f_z)_{z\in\CC}$. The formula
\[
\sigma_t(a)=f_{\mathrm{i}t}*a*f_{\mathrm{i}t}
\]
for $t\in\RR$ and $a\in\cA$ defines a one parameter group of automorphisms of $\cA$. This group is called the \emph{modular group} of $\GG$. Let us note that the modular group has a continuous extension to $A_u$, because all Woronowicz characters extend to characters of $A_u$ and it is easy to see that the family $(f_{\mathrm{i}t})_{t\in\RR}$ is continuous on $A_u$. A theorem of S.L.~Woronowicz (\cite[Theorem 2.6]{cqg}) asserts that the one parameter group $(\sigma_t)_{t\in\RR}$ is also continuous on $A_r$ and is intimately connected with the failure of the Haar measure $h$ of $\GG$ to be a trace ($h$ is a $\sigma$-KMS state).

\section{Some tools}\label{tools}

\subsection{Continuity of Woronowicz characters}\label{contW}

Let $\GG=(A,\Delta)$ be a compact quantum group and let $G$ be the set of non-zero multiplicative functionals on $A$. Clearly $G$ is a weak$^*$ compact subset of the unit sphere in $A^*$ and convolution of functionals defines on $G$ a structure of a compact associative semigroup. There are many ways to see that $G$ is in fact a compact group. Indeed, let $\gamma:A\to\C(G)$ be the Gelfand transform. Clearly $\gamma$ is a surjective $*$-homomorphism and if $\Delta_G$ is the map $\C(G)\to\C(G\times{G})$ given by
\[
(\Delta_G(f))(\phi,\psi)=f(\phi*\psi)
\]
then $(\gamma\tens\gamma)\comp\Delta=\Delta_G\comp\gamma$ by the very definition of convolution product (in other words $\gamma$ is a \emph{quantum group morphism}). Therefore we have the density of linear spans of
\[
\bigl\{\Delta_G(f)(\I\tens{g})\st{f,g}\in\C(G)\bigr\}
\quad\text{and}\quad\bigl\{(f\tens\I)\Delta(g)\st{f,g}\in\C(G)\bigr\}
\]
in $\C(G)\tens\C(G)$ which is equivalent to cancellation laws in $G$ (cf.~Example \ref{com}). It follows that $G$ is a compact group. In particular it has the unit element $\phi_0$.

The next thing we want to check is that $\phi_0$ is the extension to $A$ of the counit $e$ defined on the Hopf $*$-algebra $\cA$ associated with $\GG$. This follows by considering a finite dimensional unitary representation $u\in{M}_n(\CC)\tens{A}$ and the unitary matrix $U=(\id\tens\phi_0)u$. It follows from \eqref{Delu} that $U=U^2$, which for a unitary matrix means $U=\I$. This means that $\phi_0(u_{i,j})=\delta_{i,j}$ for any finite dimensional unitary representation $u=(u_{i,j})$ of $\GG$. The counit $e$ of $\cA$ has the same values on matrix elements of representations, and $\cA$ is spanned by these matrix elements. Therefore $\phi_0=e$ on $\cA$.

\begin{theorem}\label{tool}
Let $\GG=(A,\Delta)$ be a compact quantum group such that the $\cst$-algebra $A_r$ possesses a character. Then all Woronowicz characters are continuous on $A$.
\end{theorem}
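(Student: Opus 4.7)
The plan is to leverage the interplay between the counit, the continuous extension of the modular group to $A_r$, and the identity element of the character group. First I would observe that the hypothesis forces the counit $e$ of $\cA$ to extend continuously to a character $e_r$ of $A_r$. The argument given in the paragraph before the theorem for $A$ applies verbatim to $A_r$: non-zero multiplicative functionals on $A_r$ form a weak$^*$-compact semigroup under convolution that inherits the cancellation laws from the density conditions for $\GG_r$; being non-empty by hypothesis, it is in fact a compact group, and its identity $e_r$ must coincide on the dense Hopf $*$-subalgebra $\rho_r(\cA)\cong\cA$ with the counit by the $U=U^2$ argument.

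Second, I would establish the Hopf-algebraic identity $e\comp\sigma_t=f_{2\mathrm{i}t}$ on $\cA$. Unwinding $\sigma_t(a)=f_{\mathrm{i}t}*a*f_{\mathrm{i}t}$ and applying $e$ one computes
\[
e(\sigma_t(a))=e\bigl(f_{\mathrm{i}t}*(a*f_{\mathrm{i}t})\bigr)=f_{\mathrm{i}t}(a*f_{\mathrm{i}t})=(f_{\mathrm{i}t}*f_{\mathrm{i}t})(a)=f_{2\mathrm{i}t}(a),
\]
where the second equality uses that $e=f_0$ is the neutral element of convolution and the final equality is the group law of Theorem \ref{slwc}.

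Third, I would assemble the pieces. By the theorem of Woronowicz cited at the end of the \emph{Modular group} subsection, $\sigma_t$ extends continuously to a $*$-automorphism $\sigma_t^r$ of $A_r$; hence $e_r\comp\sigma_t^r\comp\rho_r$ is a continuous character of $A$ whose restriction to $\cA$ equals $e\comp\sigma_t=f_{2\mathrm{i}t}$ by the previous step. Since $2t$ sweeps out all of $\RR$ together with $t$, every Woronowicz character $f_{\mathrm{i}s}$ admits a continuous extension to $A$, as required.

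The main obstacle, though modest, is the verification of the identity $e\comp\sigma_t=f_{2\mathrm{i}t}$; beyond that, the proof is pure assembly of results already summarized in Section \ref{addi} and the paragraph preceding the theorem. A subsidiary point worth checking with care is that the identity of the character group of $A_r$ really does coincide with $e$ on $\cA$, rather than merely with some other idempotent multiplicative functional, which is the content of the observation $\phi_0(u_{i,j})=\delta_{i,j}$ reproduced just before the theorem.
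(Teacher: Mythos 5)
Your proposal is correct and follows essentially the same route as the paper's proof: identify the unit of the character group of $A_r$ with the continuous extension of the counit, verify $e\comp\sigma_t=f_{2\mathrm{i}t}$ on $\cA$ (your convolution-based computation is just a rephrasing of the paper's tensor-notation one), and invoke the continuity of $(\sigma_t)_{t\in\RR}$ on $A_r$ to extend the Woronowicz characters. No gaps.
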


\begin{proof}
From the discussion preceding statement of the theorem we know that the nonempty set of characters of $A_r$ is a compact group whose unit is the extension to $A_r$ of the counit $e$ of the Hopf $*$-algebra $\cA$ associated with $\GG_r$. We will still write $e$ to denote this extension.

Now for any $a\in\cA$ and $t\in\RR$ we have
\[
\begin{split}
e\bigl(\sigma_t(a)\bigr)
&=(f_{\mathrm{i}t}\tens{e}\tens{f_{\mathrm{i}t}})(\Delta_\cA\tens\id)\Delta_\cA(a)\\
&=(f_{\mathrm{i}t}\tens{f_{\mathrm{i}t}})\Delta_\cA(a)
=(f_{\mathrm{i}t}*{f_{\mathrm{i}t}})(a)=f_{2\mathrm{i}t}(a)
\end{split}
\]
and we know that both $e$ and $(\sigma_t)_{t\in\RR}$ are continuous on $A_r$. This means that Woronowicz characters $(f_{\mathrm{i}t})_{t\in\RR}$ also have continuous extensions to $A_r$. Since $A_r$ is the image of $A$ under $\rho_r$ (cf.~Subsection \ref{red}) which is the identity on $\cA$, all Woronowicz characters also extend to characters of $A$.
\end{proof}

It has to be noted that in fact a statement much stronger than Theorem \ref{tool} is true. Namely, existence of a character on $A_r$ implies that $\rho_u$ and $\rho_r$ are isomorphisms, so in particular, $A_r=A=A_u$ (\cite{bmt}) and Woronowicz characters must extend to $A$. However, we will make use solely of the weaker statement presented above.

\subsection{Other results}

We will need two more results that will help us disprove existence of compact quantum group structure on some compact quantum spaces. The first one follows from a very deep theorem of B\'edos, Murphy and Tuset ({\cite[Theorem 1.1]{bmt2}}).

\begin{theorem}\label{bmtnuc}
Let $\GG=(A,\Delta)$ be a compact quantum group of Kac type with associated Hopf $*$-algebra $\cA$. Then the following are equivalent:
\begin{enumerate}
\item the $\cst$-algebra $A_r$ is nuclear,
\item the counit of $\cA$ is continuous on $A_r$.
\end{enumerate}
\end{theorem}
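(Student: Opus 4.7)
The strategy is to route both conditions through a common intermediary: coamenability of $\GG$, equivalently amenability of the dual discrete quantum group $\hat\GG$. The Kac-type hypothesis is essential here, because Theorem \ref{KacT} then gives us a tracial Haar state on $A_r$, and in the presence of a trace the various notions of amenability line up particularly cleanly.

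The direction (2) $\Rightarrow$ (1) is the elementary one. Assuming the counit $e$ of $\cA$ extends to a bounded character on $A_r$, one combines this with the tracial Haar state and the formulas for convolution with a character to exhibit a left-invariant mean on $\hat\GG$, i.e.\ to conclude that $\hat\GG$ is amenable. Amenability of a discrete quantum group implies nuclearity of its reduced $\cst$-algebra by the standard quantum analogue of the classical amenable-implies-nuclear argument, and the reduced $\cst$-algebra of $\hat\GG$ is canonically isomorphic to $A_r$ (once more using the Kac hypothesis).

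The direction (1) $\Rightarrow$ (2) is the heart of the matter, and this is where the deep result \cite[Theorem 1.1]{bmt2} of B\'edos, Murphy and Tuset is unavoidable. Their theorem asserts that, for a Kac-type compact quantum group, nuclearity of $A_r$ already forces $\hat\GG$ to be amenable, and hence forces $\GG$ to be coamenable; coamenability then gives the continuous extension of the counit to $A_r$ by one of its standard characterisations. The main obstacle to any self-contained proof lies entirely in this implication: passing from a tensor-product factorisation property of a single $\cst$-algebra to a quantum-group-theoretic invariant mean is precisely the nontrivial content of the BMT theorem, and any attempt to reproduce it would require essentially all of the machinery of \cite{bmt2}.
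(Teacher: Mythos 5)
Your proposal is correct and matches the paper's approach, since the paper offers no independent argument: Theorem \ref{bmtnuc} is presented there as following directly from \cite[Theorem 1.1]{bmt2}, which is exactly the reduction you make for the hard implication (1) $\Rightarrow$ (2). Your sketch of (2) $\Rightarrow$ (1) via continuity of the counit, coamenability, and amenability of the dual discrete quantum group is the standard route and is already contained in the cited equivalences.
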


Recall that a $\cst$-algebra $A$ is \emph{nuclear} if for any $\cst$-algebra $B$ the algebraic tensor product $A\atens{B}$ admits a unique $\cst$-norm. This property can be also characterized in many different ways, but all we will need is the fact that a crossed product of a commutative $\cst$-algebra by an action of a commutative group is nuclear (\cite[Proposition 2.1.2]{roerdam}).

The next fact we will need is the following:

\begin{theorem}[{\cite[Remark A.2]{qbc}}]\label{qbcT}
Let $\GG=(A,\Delta)$ be a compact quantum group such that $A$ admits a faithful family of tracial states. Then $\GG$ is of Kac type.
\end{theorem}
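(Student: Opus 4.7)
The plan is to show that the Haar state $h$ of $\GG$ is itself a trace; by Theorem \ref{KacT} this forces $\GG$ to be of Kac type. The key preliminary observation is that convolution preserves traciality: if $\phi,\psi$ are tracial states on $A$, then $\phi\tens\psi$ is a tracial state on the minimal tensor product $A\tens{A}$ (checked first on elementary tensors and extended by continuity), and since $\Delta$ is a $*$-homomorphism,
\[
(\phi*\psi)(ab)=(\phi\tens\psi)\bigl(\Delta(a)\Delta(b)\bigr)=(\phi\tens\psi)\bigl(\Delta(b)\Delta(a)\bigr)=(\phi*\psi)(ba).
\]
Hence the set $T$ of tracial states on $A$ is convex, weak-$*$-closed, and closed under the convolution product.

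Next I would extract from the given faithful family $\{\tau_i\}$ a single faithful tracial state $\tau\in T$ (for instance as a convergent series $\tau=\sum_n2^{-n}\tau_n$ over a countable subfamily, with an obvious net refinement if $A$ is not separable) and form the Ces\`aro averages $\phi_N=\frac{1}{N}\sum_{n=1}^{N}\tau^{*n}\in T$. A direct computation gives $\tau*\phi_N-\phi_N=\frac{1}{N}(\tau^{*(N+1)}-\tau)$, which has norm $O(1/N)$. By weak-$*$ compactness of the state space, some subnet of $(\phi_N)$ converges to a state $\eta$, necessarily in $T$, that satisfies $\tau*\eta=\eta$. Iterating this identity and averaging once more yields $\eta*\eta=\eta$, so $\eta$ is an idempotent tracial state.

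The critical step is now to identify $\eta$ with $h$. Here one would invoke the description of idempotent states on a compact quantum group as precisely the Haar states of its closed quantum subgroups (due to Pal and subsequent authors) together with a Kawada--Ito style argument: the only idempotent state $\eta$ admitting a faithful $\tau$ with $\tau*\eta=\eta$ is $\eta=h$, because the closed quantum subgroup supporting $\eta$ must exhaust all of $\GG$ whenever the ``driving'' state $\tau$ is faithful. Once this is secured, $h$ appears as a weak-$*$ limit of tracial states and is therefore itself tracial, and Theorem \ref{KacT} closes the argument. The principal obstacle is exactly this last identification, which relies on structural results on convolution semigroups of states on compact quantum groups going beyond the Woronowicz toolkit explicitly developed in the preceding sections.
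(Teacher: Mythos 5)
Your overall strategy---realize the Haar state $h$ as a weak-$*$ limit of tracial states and then invoke Theorem \ref{KacT}---is viable, and the preliminary steps are correct: convolution of tracial states is tracial, and any weak-$*$ cluster point $\eta$ of the Ces\`aro means $\phi_N$ is a tracial idempotent state with $\tau*\eta=\eta=\eta*\tau$. (For the record, the paper offers no proof to compare against; it quotes the result from \cite[Remark A.2]{qbc}.) The step you yourself flag as the ``principal obstacle'' is, however, a genuine gap, and the tool you propose to close it with does not exist: idempotent states on a compact quantum group are \emph{not} in general Haar states of closed quantum subgroups---Pal's contribution was precisely a counterexample to that statement (on the Kac--Paljutkin quantum group), not a proof of it. Fortunately no classification of idempotent states is needed. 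The identification $\eta=h$ for faithful $\tau$ is exactly the content of the key lemma in Woronowicz's construction of the Haar measure \cite{cqg}: if $\omega*\rho=\rho$ for states $\omega,\rho$, then $\mu*\rho=\mu(\I)\,\rho$ for every positive functional $\mu$ dominated by $\omega$, and when $\omega$ is faithful this bootstraps to $\mu*\rho=\rho$ (and, symmetrically, $\rho*\mu=\rho$) for \emph{all} states $\mu$, which by the uniqueness in Theorem \ref{h} forces $\rho=h$. With that substitution your argument closes; as written, it rests on a misattributed and in fact false classification.

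A secondary gap is the reduction of the faithful \emph{family} $\{\tau_i\}$ to a single faithful tracial state. A series $\sum_n 2^{-n}\tau_n$ only sees countably many members of the family, and for non-separable $A$ a faithful family need not admit a faithful countable subfamily; there is no ``obvious net refinement,'' since an uncountable convex series does not converge. If one of the non-faithful $\tau_i$ is used instead, the cluster point of its Ces\`aro means may well be a proper idempotent different from $h$, so the argument genuinely needs faithfulness of the single state it convolves. You should either assume $A$ separable, or reduce to that case: to check $h(ab)=h(ba)$ for $a,b\in\cA$ it suffices to work inside the separable sub-quantum-group generated by the finitely many unitary representations whose matrix elements produce $a$ and $b$, where the restricted family $\{\tau_i\}$ is still faithful and the countable extraction is legitimate.
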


\section{Examples}\label{ex}

In this section we will give examples of compact quantum spaces which do not admit any compact quantum group structure. In each case we will use the results collected in Section \ref{tools} to prove non-existence of such a structure. The quantum spaces under consideration are all discussed in the survey article \cite{dab}, where further references can be found.

\begin{example}[The quantum torus]\label{qt}
Let us fix $\theta\in]0,1[$. The \emph{quantum two-torus} $\TT_\theta^2$ is the quantum space corresponding to the rotation $\cst$-algebra $A_\theta$, i.e.~the universal $\cst$-algebra generated by two unitary elements $u$ and $v$ satisfying the relation $uv=\mathrm{e}^{2\pi\mathrm{i}\theta}vu$ (\cite{rie1}). This $\cst$-algebra is nuclear (indeed, $A_\theta=\C(\TT)\rtimes\ZZ$, where the action is by rotation by $2\pi\theta$). Moreover it possesses a faithful trace (unique if $\theta$ is irrational).

Let us assume that there exists $\Delta:A_\theta\to{A_\theta}\tens{A_\theta}$ such that $\GG=(A_\theta,\Delta)$ were a compact quantum group. We know from theorem \ref{qbcT} that the Haar measure of $\GG$ must then be a trace and $\GG$ is of Kac type. Moreover, since $A_\theta$ is nuclear, we know from Theorem \ref{bmtnuc} that $(A_\theta)_r$ must have a character (because then $(A_\theta)_r$ is also nuclear). But if this were the case then $A_\theta$ would admit a character, since $(A_\theta)_r$ is a quotient of $A_\theta$ (in fact, for irrational $\theta$ the algebra $A_\theta$ is simple, so there are no proper quotients of $A_\theta$) and we know that $A_\theta$ does not admit any characters.

The same reasoning can be applied to show that higher dimensional quantum tori (\cite{rie2}) do not admit a compact quantum group structure for nontrivial deformation parameters. If the deformation parameters are trivial (that means $\theta=0$ for the two-torus) the resulting $\cst$-algebra $A_\theta$ is just the algebra of continuous function on a torus and, as such, carries a compact quantum group structure.

It should be noted that the quantum two-torus can be made into a ``part'' of a compact quantum group. This was done by P.M.~Hajac and T.~Masuda in \cite{qdt}.

There is one more interesting remark. The theory of quantum groups on operator algebra level has its version in the language of von Neumann algebras (see e.g.~\cite{kvvn}). The passage to von Neumann algebras is achieved by taking weak closure in the GNS representation defined by the Haar measure. If $\GG=(A_\theta,\Delta)$ were a compact quantum group for some irrational $\theta$ then the unique tracial state of $A_\theta$ would be its Haar measure (as we said earlier this follows from Theorem \ref{qbcT} and Theorem \ref{KacT}). Therefore the resulting von Neumann algebra would be the hyperfinite factor of type $\mathrm{II}_1$. We already know that the quantum torus is not a quantum group. However the ``algebra of measurable essentially bounded functions'' on the quantum torus is the same as that of ``measurable functions'' on many compact quantum groups. Indeed we get the same von Neumann algebra when we start with $\cst(\Gamma)$ for any amenable i.c.c.~discrete group $\Gamma$.
\end{example}

\begin{example}[Batteli-Elliott-Evans-Kishimoto quantum two-spheres]
The algebra of continuous functions on a Bratteli-Elliott-Evans-Kishimoto quantum two-sphere is by definition the fixed point subalgebra $C_\theta$ of $A_\theta$ considered in Example \ref{qt} under the action of $\ZZ_2$ sending $u$ and $v$ to their adjoints (\cite{beek,dab}). For $\theta=0$ we have $C_\theta=\C(\sS^2)$ and this $\cst$-algebra carries no compact quantum group structure because $\sS^2$ is not a topological group. For $\theta\in]0,1[$ the argument used to show that $\TT_\theta^2$ is not a compact quantum group works perfectly well for the BEEK quantum two-spheres. All we need is to know that $C_\theta$ is a nuclear $\cst$-algebra with a faithful trace which admits no characters.
\end{example}

\begin{example}[Standard Podle\'s quantum sphere]\label{expod}
In \cite{spheres} Piotr Podle\'s defined and studied a class of quantum spaces which later came to be known as Podle\'s (quantum) spheres. These are compact quantum spaces endowed with an action of the quantum $\mathrm{SU}(2)$ group (\cite{su2}) mimicking the standard action of $\mathrm{SU}(2)$ on $\sS^2$. Podle\'s found all such objects and showed that they form a family $(\sS^2_{q,c})$ labeled by a parameter $c\in[0,\infty]$ (the parameter $q$ is related to the particular quantum $\mathrm{SU}(2)$ group). Only for $c=0$ can $\sS^2_{q,c}$ be considered as a quotient homogeneous space the quantum $\mathrm{SU}(2)$ and this quantum sphere is referred to as the \emph{standard} Podle\'s quantum sphere (cf.~\cite{podsym}).

The algebra of continuous functions on $\sS^2_{q,0}$ is isomorphic to $\cK^+$, i.e.~the minimal unitization of the algebra of compact operators on a separable Hilbert space. This $\cst$-algebra has a unique proper ideal which is maximal. This makes it easy to see that if there existed $\Delta:\cK^+\to\cK^+\tens\cK^+$ such that $\GG=(\cK^+,\Delta)$ were a compact quantum group then the Haar measure $h$ of $\GG$ would have to be faithful. Otherwise the left kernel $\cJ$ of $h$ would be either $\{0\}$ or so big, that the quotient $A_r=\cK^+/\cJ$ would be commutative (equal to $\CC$). But we already said in Subsection \ref{univ} that compact quantum groups described by commutative $\cst$-algebras are automatically universal. It follows that $\GG$ must be equal to its reduced version. Note that on $\cK^+$ there exists a nontrivial character.

There are no faithful traces on $\cK^+$, so that the family of modular functionals must be nontrivial. But since $\cK^+$ has a character, the Woronowicz characters must be continuous and nontrivial on $\cK^+$ (Theorem \ref{tool}). However there is only one character on $\cK^+$ which shows that existence of $\Delta$ is impossible.
\end{example}

\begin{example}[Natsume-Olsen quantum two-spheres]
In \cite{natsume} a family of quantum spaces was introduced which we call the family of Natsume-Olsen quantum two-spheres. The family is parametrized by a parameter $t\in\bigl[0,\tfrac{1}{2}\bigr[$. The $\cst$-algebra $B_t$ corresponding to $t$ is the universal $\cst$-algebra generated by two elements $z$ and $\zeta$ satisfying the following relations:
\begin{eqnarray}
\zeta^*\zeta+z^2=\I=\zeta\zeta^*+(t\zeta\zeta^*+z)^2,\nonumber\\
\zeta{z}-z\zeta=t\zeta(\I-z^2).\qquad\;\quad\nonumber
\end{eqnarray}
For $t=0$ we have $B_t=\C(\sS^2)$ and $\sS^2$ is not a compact group. 

In case $t>0$ let us suppose that $\GG=(B_t,\Delta)$ is a compact quantum group for some comultiplication $\Delta:B_t\to{B_t}\tens{B_t}$. Then we note that $B_t$ has an ideal $\cI$ isomorphic to $\C(\TT)\tens\cK$ such that $B_t/\cI=\CC^2$ (\cite{natsume}). This can be used to show that the Haar measure $h$ of $\GG$ cannot be a trace. Indeed, if it were then a simple argument shows that the left kernel $\cJ$ of $h$ would contain $\cI$. Then the quotient $(B_t)_r=B_t/\cJ$ would be commutative and we would arrive at a contradiction as e.g.~in Example \ref{expod}. Then one must do a little work to show that $(B_t)_r$ admits a character (\cite{non}). By Theorem \ref{tool} all Woronowicz characters must be continuous on $B_t$ and by Theorems \ref{mKac} and \ref{KacT} and the fact that $h$ is not a trace we see that $B_t$ must have a nontrivial one-parameter continuous family of characters. This contradicts a simple fact that the character space of $B_t$ consists of two points. It follows that Natsume-Olsen quantum spheres do not admit a compact quantum group structure.
\end{example}

\section{On the quantum disk}

Let us now describe some partial results on the question whether the \emph{quantum disk} could have a quantum group structure. The quantum disk is the compact quantum space described by the Toeplitz algebra $\cT$ (\cite{kl}). We do not have a proof that the quantum disk does not admit a compact quantum group structure, but we can go quite far along the lines of our previous examples.

If we assume that $\GG=(\cT,\Delta)$ is a compact quantum group then the Haar measure $h$ of $\GG$ must be faithful. This is because $\cK$ is an essential ideal of $\cT$, and the left kernel $\cJ$ of $h$ would have a non zero intersection with $\cK$ which by simplicity of $\cK$ would be all of $\cK$. In particular $\cJ$ would contain $\cK$ and the quotient would be commutative. As we stated in Subsection \ref{univ} this would imply that the $\cst$-algebra $\cT$ is commutative. In particular $h$ cannot be a trace since there is no faithful trace on $\cT$.

As $h$ is faithful, we see that $\GG=\GG_r$ and since $\cT$ admits a character, all Woronowicz characters must be continuous on $\cT$. The character space of the $\cst$-algebra $\cT$ is homeomorphic to $\TT$ and carries a structure of a compact group. Let us first see that this is exactly the set of Woronowicz characters with the group structure of quotient of $\RR$ (recall that $f_{it}*f_{is}=f_{i(t+s)}$). Indeed, let $G$ be the compact group of characters of $\cT$ and let $G_W$ be the subgroup consisting of Woronowicz characters. $G_W$ is nontrivial (because $h$ is not tracial) and it is a connected subset of $G$. Therefore $G_W$ contains the unit of $G$ as an interior point (cf.~Section \ref{intro}) and thus a neighborhood of the unit of $G$. But $G$ is connected, so it is algebraically generated by any open neighborhood of its unit. It follows that $G_W=G$.

The Gelfand transform $\gamma:\cT\to\C(\TT)$ is a morphism of compact quantum groups, where the comultiplication on $\C(\TT)$ comes from group structure of $\TT=G=G_W$. Using some known results about Toeplitz algebras (\cite{douglas}) one can show that there is an isometry $\bx$ which generates $\cT$ and is mapped to a group like generator $\bz$ of $\C(\TT)$ under the Gelfand transform (this means that the comultiplication on $\C(\TT)$ sends $\bz$ to $\bz\tens\bz$). It follows that $\Delta(\bx)=\bx\tens\bx+X$, where $X\in\ker\gamma\tens\gamma=\cT\tens\cK+\cK\tens\cT$. Now, if $X$ could be shown to be zero we would be done, since in that case $\bx$ would be group like and by \cite[Theorem 2.6(2)]{cqg} would belong to the Hopf $*$-algebra associated to $\GG$. However group like elements of Hopf algebras must be invertible and $\bx$ is not. Unfortunately, although we can show a number of properties of $X$, as for now the property that $X=0$ is not within our reach. In fact, using techniques from the theory of Hopf-Galois extensions, one could show that the weaker property that $(\id\tens\gamma)(X)=0$ would be enough to disprove existence of a compact quantum group structure on the quantum disk.

\section*{Acknowledgments}

Research partially supported by Polish government grant no.~N201 1770 33. This paper is based on a lecture delivered at the 19$^\textrm{th}$ International Conference on Banach Algebras held at Będlewo, July 17--27, 2009. The support for the meeting by the Polish Academy of Sciences, the European Science Foundation, and the Faculty of Mathematics and Computer Science of the Adam Mickiewicz University at Pozna\'n is gratefully acknowledged.

\end{document}